\newtheorem{thm}{Theorem}[section]
\newtheorem{rem}[thm]{Remark}
\newtheorem{lem}[thm]{Lemma}
\numberwithin{equation}{section}
\title{\textbf{Generalized Pontryagin Maximum Principle in port-Hamiltonian Systems and Application} }
\author{{ Yuanzun Zhao} \\
{Department of Mathematics}\\ {Peking University,
Beijing 100871, China}\\
{\sf email: lukesweet@163.com} }
\date{}
\begin{document}

\maketitle
\renewcommand{\abstractname}{}
\begin {abstract}
\noindent
  {\bf Abstract}
{ }Based on Pontryagin Maximum Principle (PMP), this paper establishes a generalized PMP aiming at control system with with extra input/output terms. The paper details the adaptive target and gives a proof of the generalized theorem. Transformation and application of this specific method is showed and thereafter an example explained its feasibility.

\noindent
 {\bf Keyword}
{ }Pontryagin Maximum Principle; port-Hamiltonian Systems
\end{abstract}
\newpage

\section{{\textbf{Introduction}}}
Optimal control, a crucial component of modern control theory, has already been generalized to many kinds of control systems. One of the most used method to deal with optimal control problems is Pontryagin Maximum Principle(PMP)\cite{qy1}. The core idea of PMP is to embed the target control system into a Hamiltonian system and transform the optimal requirement into a boundary condition\cite{qy2}. However, not all the problems can be solved through PMP.

Towards control problems with extra terms in the control equation or extra algebraic expression, which cannot be easily solved by classic PMP, it's effective to embed the systems into port-Hamiltonian. The embedding method, analogous to handling of PMP\cite{An1}, is called Pontryagin Maximum Principle for Port-Hamiltonian Systems and will be detailed in the following. All the conclusions is based on the port-Hamiltonian systems theory and is proved of its reliability and maturity.\cite{pa1}

\vspace{2mm}
 \noindent {\bf Acknowledgement:} I would like to thank Mr. Zhangju Liu for his useful advice and great encouragement.
\vspace{1cm}

\section{{\textbf{port-Pontryagin Maximum Principle and Application}}}
\subsection{{\textbf{Generalized Optimal Control Problems in port-Hamiltonian Systems}}}
Consider the special control system described as follows:
\begin{equation}
\dot{q}=F_u (q)+B(q)f',
\end{equation}
\begin{equation}
e=A^{T}(q)(\dot q-B(q)f')
\end{equation}
$q\in M,    u=u(t)\in U$, $f'=f'(t),e=e(t)$ is a continuous function in $N$,
with initial conditions
\begin{equation}
q(0)=q_0,
\end{equation}
Given cost function $¦Õ:M\times U\times T^{*}N \rightarrow  \mathbb{R}$, the problem is to find a pair of control $u=\tilde{u}$ and input that minimizes $$J=\int_0^{t_1} \varphi(q(t),u(t),e(t)) dt.$$

This is a typical control problem in port-Hamiltonian system, with $B(q)f$ as a port of any kind. Notice that $f$ is a continuous input(different from $u$), that is, it cannot be simply viewed as a part of control. The basic idea dealing with control problems of this kind is dividing it into two parts: finding out the optimal $u$ for fixed $f$, and then finding out the optimal continuous $f$.

To solve the former part, we need to establish a generalized PMP aimed at the port. It's required to embed the control system in a port-Hamiltonian system, just like PMP embedding control systems in Hamiltonian systems. While the latter part requires transformation and application of PMP.

The two parts are respectively discussed as follows.
\subsection{{\textbf{Generalization to PMP in port-Hamiltonian system }}}
We now consider embedding the optimal control problem in the following port-Hamiltonian system:
\begin{equation}
\left\{
\begin{aligned}
&\frac{\partial q}{\partial t}=\frac{\partial H_u}{\partial p}+B(q)f'\\
&\frac{\partial p}{\partial t}=-\frac{\partial H_u}{\partial q}+A(q)f\\
&e'=B^{T}(q)\frac{\partial H_u}{\partial q}\\
&e=A^{T}(q)\frac{\partial H_u}{\partial p}
\end{aligned}
\right.
\end{equation}
whereas $M\subset \mathbb{R}^n$ is a configuration space, $U\subset \mathbb{R}^l$ is admissible control set, $N\subset \mathbb{R}^k$ is admissible input set. $q\in M$ is the state of system, and $u(t)\in U$ is a preinstalled control, $A(q), B(q)$ is $n\times k$ transform matrixes depending on state $q$.

With Hamiltonian defined as $h_{u,f}(\lambda)=\langle \lambda,f_u (\lambda) \rangle, \lambda\in T^{*}M$, thereupon we have the following geometric statement of port-PMP:
\theoremstyle{plain} \newtheorem{theorem}{Theorem}[section]
\begin{thm}[PMP for port-Hamiltonian systems]Given an admissible control $u=\tilde{u}(t),f=\tilde{f}(t)$, if (2.2),(2.3) have a solution $\tilde{q}(t)=q_{\tilde{u}(t),\tilde{f}(t)}(t)$, if\\
\begin{equation*}
\tilde{q}(t_1)\in \partial A_{q_0}(t_1),
\end{equation*}
here $A_{q_0}(t_1)$ indicates the attainable set from point $q_0$ in $t_1$ time, whatever $u(t)$,\\
with $f(t)=\tilde f(t)$ fixed.\\
Then there exists a Lipschitzian curve $\lambda_t\in T_{\tilde{q}(t)}^* M,t\in [0,t_1]$ in the cotangent bundle, such that\\
\begin{equation}
\lambda_t\not\equiv0,
\end{equation}
\begin{equation}
\frac{\partial \lambda}{\partial t}=-\frac{\partial h_u}{\partial q}+A(q)f,
\end{equation}
\begin{equation*}
h_{\tilde{u}(t),\tilde{f}(t)}(\lambda_t)+\int_0^{t} e(\tilde{u},\tau)f(\tau)+e'(\tilde{u},\tau)f'(\tau)d\tau
\end{equation*}
\begin{equation}
=\max_{u\in U}\{h_{u(t),\tilde{f}(t)}(\lambda_t)+\int_0^{t} e(u,\tau)f(\tau)+e'(u,\tau)f'(\tau)d\tau\}
\end{equation}
for almost all $t\in [0,t_1]$
\end{thm}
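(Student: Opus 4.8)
The plan is to follow the geometric proof of the classical PMP in its attainable-set / cotangent-bundle form (the Agrachev--Sachkov framework suggested by the hypothesis $\tilde q(t_1)\in\partial A_{q_0}(t_1)$), treating the fixed input $f=\tilde f(t)$ as part of a time-dependent drift so that, for that fixed $f$, the port system (2.4) reduces to an ordinary control family generated by the vector fields $q\mapsto F_u(q)+B(q)\tilde f'(t)$. First I would augment the configuration space by one coordinate $q^0$ that accumulates the running cost, setting $\dot q^0=\varphi(q,u,e)$ with the port output $e=A^T(q)\frac{\partial H_u}{\partial p}$ substituted in. Optimality of the reference pair then becomes the statement that the endpoint of the lifted trajectory lies on the boundary of the attainable set of the extended system, which is exactly the hypothesis $\tilde q(t_1)\in\partial A_{q_0}(t_1)$ transported into the extended space.

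From this boundary condition I would extract a nonzero supporting covector $\lambda_{t_1}\in T^*_{\tilde q(t_1)}M$ by a separation argument: a boundary point of the attainable set admits an outward support hyperplane, so the convex approximating cone to the attainable set at $\tilde q(t_1)$ cannot fill a full neighbourhood and there is a covector annihilating it. I would then \emph{define} the adjoint curve by pulling $\lambda_{t_1}$ back along the flow of the reference field, $\lambda_t=(P^{t_1}_t)^*\lambda_{t_1}$, where $P^{t_1}_t$ is the flow from time $t$ to $t_1$. Since the cotangent lift of this flow is the Hamiltonian flow of $h_{\tilde u,\tilde f}$, differentiating the pullback yields a Lipschitzian curve satisfying (2.5) and (2.6): the drift $F_{\tilde u}+B\tilde f'$ produces the usual $-\partial h_u/\partial q$ term, while the costate port coupling $A(q)f$ appearing in (2.4) enters precisely as the extra additive term $A(q)f$ in (2.6), and nontriviality $\lambda_t\not\equiv0$ follows because $\lambda_{t_1}\neq0$ is propagated by an invertible linear flow.

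The maximization (2.7) is the heart of the argument, and I would obtain it by needle (spike) variations. Replacing $\tilde u$ by an arbitrary admissible $w\in U$ on a short interval $[\tau,\tau+\varepsilon]$ and letting $\varepsilon\to0$, the first-order shift of the extended endpoint is $\varepsilon$ times the difference of the reference and perturbed vector fields, transported to the endpoint; pairing this displacement with the support covector $\lambda_\tau$ and using the sign forced by the boundary condition (the variation must point into the attainable set) yields $h_{w,\tilde f}(\lambda_\tau)\le h_{\tilde u,\tilde f}(\lambda_\tau)$ at every Lebesgue point $\tau$. The extra integral $\int_0^t e(u,\tau)f+e'(u,\tau)f'\,d\tau$ then appears because the same needle variation perturbs the port output $e$ (which depends on $u$ through $\partial H_u/\partial p$) and the companion port $e'$; this perturbation is carried into the accumulated cost coordinate $q^0$, and collecting the direct Hamiltonian term with this accumulated port-cost term reproduces exactly the bracketed expression maximized in (2.7).

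The step I expect to be the main obstacle is precisely this port-cost bookkeeping: in the classical PMP the Hamiltonian is maximized pointwise, whereas here the maximized object carries a history-dependent integral over $[0,t]$, so I must show that the port outputs' contribution to the cost can be differentiated through the needle variation and attributed correctly, uniformly in $t$ and for almost every $\tau$ (which is where the Lebesgue-point analysis for the measurable controls $\tilde u,\tilde f$ is essential). A secondary technical point is verifying that the convex approximating cone assembled from the augmented needle variations remains the correct separating object when the drift is genuinely time dependent through $\tilde f'(t)$, so that the support argument producing $\lambda_{t_1}\neq0$ is still valid and the resulting inequality upgrades to the equality asserted in (2.7).
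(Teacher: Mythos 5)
Your skeleton --- a supporting covector at a boundary point of the attainable set, pullback along the reference flow to define $\lambda_t$, and needle variations at Lebesgue points to obtain pointwise maximality --- is exactly the Agrachev--Sachkov route the paper follows, and your device of absorbing the fixed $\tilde f'(t)$ into a time-dependent drift is a legitimate (arguably cleaner) substitute for the paper's Lemma 2.3, which instead bundles $(u,f)$ into a single extended control $\hat u=(u,f)$ and applies the classical cone lemma.

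Two divergences matter, and one is a genuine gap. First, the cost augmentation $\dot q^0=\varphi$ has no role here: the theorem is a purely geometric statement whose hypothesis is already $\tilde q(t_1)\in\partial A_{q_0}(t_1)$ in $M$ itself; no cost function appears in the statement, and the paper only introduces the augmented coordinate later, in Section 3, when reducing the optimal control problem to this geometric form. Second --- the gap --- your mechanism for producing the integrals $\int_0^t e(u,\tau)f+e'(u,\tau)f'\,d\tau$ inside the maximized quantity (``the needle variation perturbs the port output and this is carried into the accumulated cost coordinate'') cannot work, because with no $\varphi$ there is no coordinate to carry it. In the paper these terms come from the port structure of the adjoint system: the covector is defined with an affine shift, $\lambda_t^*=P_t^{t_1*}\lambda_{t_1}^{E*}+\int_0^t A(q(\tau))f(\tau)\,d\tau$, and the supporting-hyperplane inequality, rewritten in terms of this shifted covector paired against $F_u+\int_0^t B(q)f'\,d\tau$, produces exactly the cross terms that are then identified with $e$ and $e'$ through the port relations $e=A^T(q)\,\partial H_u/\partial p$ and $e'=B^T(q)\,\partial H_u/\partial q$. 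You correctly flagged this bookkeeping as the main obstacle, but the resolution is the shifted covector, not the cost coordinate; the same shift is also what turns the plain transport equation $\dot\lambda=-\partial h_u/\partial q$ into the ported adjoint equation with the extra $A(q)f$ term, so your claim that the cotangent lift of the flow alone yields that equation needs the same correction.
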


\begin{proof}
To prove the conclusion, a vector field depending on two parameters needs to introduced first:\\
\begin{equation*}
g_{\tau,u}={P_\tau^{t_1}}_*(F_{u(\tau)}-F_{\tilde u(\tau)}), \tau \in [0,t_1],u\in U
\end{equation*}

According to \cite{An1},
\begin{equation*}
q_u(t_1)=q_1\circ \vec{exp} \int_0^{t_1} g_\tau,u(\tau) dx
\end{equation*}

Thus we have the following lemmas:
\begin{lem}\label{Lem1}Let $\Gamma \in[0,t_1]$ be the set of Lebesgue points of the control $\tilde{u} (\cdot)$. If
\begin{equation*}
T_{q_1}M=cone\{g_{\tau,u} (q_1)|\tau \in T ,u\in U\}
\end{equation*}
Then
\begin{equation*}
q_1\in intA_{q_0}(t_1)=A_{q_0}(t_1)\backslash \partial A_{q_0}(t_1)
\end{equation*}
\end{lem}
The proof is in \cite{An1}.

Now in the port-Hamiltonian System, we define a new flow $g^{P}$
\begin{equation*}
g_{\tau,u,f}={P_\tau^{t_1}}_*((F_{u(\tau)}+B(q(t))f'(t))-(F_{\tilde u(\tau)}+B(q(t))f'(t)), \tau \in [0,t_1],u\in U
\end{equation*}
\begin{lem}\label{Lem2}Let $\Gamma \in[0,t_1]$ be the set of Lebesgue points of the control and input $\tilde{u} (\cdot),\tilde{f} (\cdot)$. If
\begin{equation*}
T_{q_1}M=cone\{g_{\tau,u,f} (q_1)|\tau \in T ,u\in U\, f\in Q\}
\end{equation*}
Then
\begin{equation*}
q_1\in intA_{q_0}(t_1)=A_{q_0}(t_1)\backslash \partial A_{q_0}(t_1)
\end{equation*}
\end{lem}
\begin{proof}
Define $\hat u=(u,f)$, $\hat u\in L^k\times C^{(1)l}\subset L^{k+l}$. Apply Lemma \ref{Lem1} to $\hat u$ and we directly get the conclusion.
\end{proof}

Now return to proof of the theorem.
Let the endpoint of trajectory $q_1$ satisfies $q_1=\tilde{q}(t_1)\in \partial A_{q_0}(t_1)$. By Lemma \ref{Lem2}, if this condition holds, origin point $o_{q_1}\in T_{q_1}M$ belongs to $\partial cone\{g_{\tau,u,f} (q_1)|\tau \in T ,u\in U, f\in N\}$, so this set has a hyperplane of support at the origin:
\begin{equation*}
\exists \lambda^{E*}_{t_1}\in T_{q_1}^*M, \lambda^{E*}_{t_1}\not=0,
\end{equation*}
satisfies:
\begin{equation*}
\langle \lambda^{E*}_{t_1},g_{t,u,f}(q_1)\rangle\leq 0, \forall t\in[0,t_1],u\in U
\end{equation*}
Let $\lambda_{t_1}^*=\lambda^{E*}_{t_1}+\int_0^{t_1} A(q(t))f(t)dt$

thus it satisfies:
\begin{equation*}
\langle \lambda_{t_1}^*+\int_0^{t_1} A(q(t))f(t)dt,g_{t,u,f}(q_1)\rangle\leq 0, \forall t\in[0,t_1],u\in U
\end{equation*}
that is
\begin{equation*}
\langle \lambda_{t_1}^{E*},P_{t*}^{t_1}(F_u(q_1)+\int_0^{t}B(q)f'd\tau)\rangle\leq \langle \lambda_{t_1}^{E*},P_{t*}^{t_1}(F_{\tilde{u}}(q_1)+\int_0^{t} B(q)f'd\tau)\rangle
\end{equation*}
further,
\begin{equation*}
\langle P_{t}^{t_1*}(\lambda_{t_1}^{E*}),F_u(q_1)+\int_0^{t}B(q)f'd\tau\rangle\leq \langle P_{t}^{t_1*}(\lambda_{t_1}^{E*}),F_{\tilde{u}}(q_1)+\int_0^{t}B(q)f'd\tau\rangle
\end{equation*}

Then flow $P_t^{t_1}$ defines Lipschitzian curve $\lambda_{t}^*$ in $ T_{q(t)}^*M$:
\begin{equation*}
\lambda_{t}^*\triangleq P_t^{t_1*}\lambda_{t_1}^*+\int_0^{t}A(q(\tau))f(\tau)d\tau\in T_{\tilde{q}(t)}^*M
\end{equation*}

In terms of this covector curve, the inequation above reads:
\begin{equation*}
\langle \lambda_{t}^*-\int_0^{t} A(q)fd\tau, F_u(q_1)+\int_0^{t}B(q)f'd\tau\rangle\leq \langle \lambda_{t}^*-\int_0^{t} A(q)fd\tau, F_{\tilde{u}}(q_1)+\int_0^{t}B(q)f'd\tau\rangle
\end{equation*}
that is,
\begin{equation*}
\langle \lambda_{t}^*,F_u(q_t)\rangle+\int_0^{t}e'(u,\tau)f'(\tau)+e(u,\tau)f(\tau)d\tau\leq \langle \lambda_{t}^*,F_{\tilde{u}}(q_t)\rangle+\int_0^{t}e'(\tilde{u},\tau)f'(\tau)+e(\tilde{u},\tau)f(\tau)d\tau
\end{equation*}
which exactly equals to Hamiltonian maximum conditions:
\begin{eqnarray*}
&&h_{\tilde{u}(t),\tilde{f}(t)}(\lambda_t)+\int_0^{t} e(\tilde{u},\tau)f(\tau)+e'(\tilde{u},\tau)f'(\tau)d\tau,\\
&&=\max_{u\in U}\{h_{u(t),\tilde{f}(t)}(\lambda_t)+\int_0^{t} e(u,\tau)f(\tau)+e'(u,\tau)f'(\tau)d\tau\}
\end{eqnarray*}

Besides, since the curve is fixed once the terminal point is fixed, the following equation holds:
\begin{equation*}
\lambda_{t}^*\triangleq P_t^{t_1*}\lambda_{t_1}^{E*}+\int_0^{t}A(q(\tau))f(\tau)d\tau=\lambda_{t_1}^*\circ (\vec{exp}\int_t^{t_1}-\frac{\partial h_{u,f}}{\partial q}+A(q(x))fdx)^*-\int_t^{t_1}A(q(\tau))f(\tau)d\tau
\end{equation*}
that is,
\begin{equation*}
\frac{\partial \lambda_t^*}{\partial t}=-\frac{\partial h_{u,f}}{\partial q}+A(q)f\\
\end{equation*}

Thus, the existence of extremal curve is proved.
\end{proof}
\vspace{1cm}

\section{{\textbf{Application of PMP for port-Hamiltonian System and Transformation of Control}}}
\subsection{{\textbf{Application of PMP for port-Hamiltonian System}}}
PMP for port-Hamiltonian System is adaptive to control systems with either extra input or output. Input $B(q)f'$ in (2.1) corresponds to storage port and output $e$ in (2.2) can be either dissipation port or another storage. Analogously to application of PMP to classic control optimal problems, we introduce an extremal parameter $\nu$.

Consider the following optimal control problem:
\begin{eqnarray}
&&\dot{q}=f_u(q), q\in M, u\in U\\
&&q(0)=q_0\in M\\
&&t_1 fixed
\end{eqnarray}
and the cost function is defined as
\begin{equation}
\hat{J}=\int_0^{t_1}\hat{\varphi}(q(t),u(t),e(t))dt,
\end{equation}

We extend this control system as follows:
\begin{equation*}
\hat{q}=\left(
\begin{array}{c}
J_{q_0}(u)\\
q
\end{array}
\right)
\end{equation*}
and extend the corresponding vector field:
\begin{equation*}
\hat{f}_u(q)=\left(
\begin{array}{c}
\varphi(q,u)\\
f_u(q)
\end{array}
\right)
\end{equation*}

Thus we get a new control system:
\begin{eqnarray}
&&\dot{\hat{q}}=\dot{f}_u(q), q\in M, u\in U\\
&&\dot{q}(0)=\dot{q}_0=\left(
\begin{array}{c}
0\\
q_0
\end{array}
\right)\\
&&t_1 fixed
\end{eqnarray}
where $\hat{q}_0$ is a (n+1)-dimension random vector.
\begin{rem}
Notice that if $\tilde{u}$ is optimal control, then the following condition holds:
\begin{equation*}
h_{\tilde{u}(t)}(\lambda_t^*)=\max_{u\in U} \{h_{u(t)}(\lambda_t^*)
\end{equation*}
Then the terminal point satisfies:
\begin{equation*}
\tilde{\hat{q}}(t_1)\in \partial A_{\hat{q}_0} (t_1),
\end{equation*}
Therefore, PMP for port-Hamiltonian Systems can be applied to find the extremal curve.
\end{rem}
\vspace{0.5cm}

\subsection{{\textbf{Introduction of extremal parameter}}}
When use PMP* to solve optimal control problems, we need transform it into Hamiltonian equations through the Hamiltonian. However, directly using the previous Hamiltonian does not extinguish maximum from minimum of cost function. To make up the defect, we introduce new parameters $\nu$ and a new control $w$.
Define $y=J_{q_0}(u)$, and consider the following system:
\begin{eqnarray}
&&\dot y=\varphi(q,u)+w\\
&&\dot q= f_u(q)
\end{eqnarray}

Then the extremal stochastic curve in origin system corresponding to the control $w(t)\equiv0$. As a result, it comes to the boundary of attainable set at $t_1$. Apply PMP to it.\\
Define new Hamiltonian as follows:
\begin{equation}
\hat{h}_{(w,u)}(\nu,\lambda^*)=\langle \lambda^*,f_u\rangle +\nu(\varphi+w)
\end{equation}

The corresponding Hamiltonian system is
\begin{equation}
\left\{
\begin{aligned}
&\frac{\partial \nu}{\partial t}=\frac{\partial \hat h}{\partial y}=0\\
&\frac{\partial y}{\partial t}=\varphi+w\\
&\dot\lambda_t^*=\vec h_{\tilde{u}(t)}(\lambda_t)
\end{aligned}
\right.
\end{equation}

The first equation stands for $v\equiv constant$.

In terms of this system, Hamiltonian Maximum condition is:
\begin{equation*}
(\langle \lambda_t^*,f_{\tilde{u}(t)}\rangle+\nu\varphi(\tilde{q}(t),\tilde{u}(t)))=\max_{u\in U,w\geq0} (\langle \lambda_t^*,f_{u(t)}\rangle+\nu\varphi(\tilde{q}(t),u(t)))+\nu w
\end{equation*}
Since the maximum of original system is attained, there must be $\nu\leq 0$, thus we can set $\nu=0$ in the right hand of maximum condition:
\begin{equation*}
(\langle \lambda_t^*,f_{\tilde{u}(t)}\rangle+\nu\varphi(\tilde{q}(t),\tilde{u}(t)))=\max_{u\in U,w\geq0} (\langle \lambda_t^*,f_{u(t)}\rangle+\nu\varphi(\tilde{q}(t),u(t)))
\end{equation*}
So we prove the following conclusion:
\begin{thm}If $u=\tilde u (t)$ is the optimal control for problem (3.9)-(3.11), that is, $\tilde u(t)$ minimizes $J(u)$. Define generalized Hamiltonian family: $h_{u,f}^\nu(\lambda)=\langle \lambda,f_u (\lambda) \rangle+\int_0^{t} e(\tau)f(\tau)+e'(\tau)f'(\tau)d\tau+\nu\varphi(q(t),u(t),e(t)), \lambda\in T^{*}M$£¬then there exists a stochastic Lipschitzian curve $\lambda_t\in T_{\tilde{q}(t)}^* M,t\in [0,t_1]$, and a number $\nu\in \mathbb{R}$ such that\\
\begin{eqnarray}
&&\lambda_t\not\equiv0,\\
&&\frac{\partial \lambda}{\partial t}=-\frac{\partial H_u}{\partial q}+A(q)f,\\
&&h\nu_{\tilde{u}(t),\tilde{f}(t)}(\lambda_t)=\max_{u\in U}\{h^\nu_{u(t),\tilde{f}(t)}(\lambda_t)\},\\
&&\nu\leq0,
\end{eqnarray}
for almost all $t\in [0,t_1]$
\end{thm}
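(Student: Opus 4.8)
The plan is to reduce the assertion to the port-Hamiltonian maximum principle already proved (Theorem 2.1) by augmenting the state with the running cost, so that the extremal parameter $\nu$ emerges as the costate of the adjoined cost coordinate. First I would set $y=J_{q_0}(u)$ and pass to the augmented state $\hat{q}=(y,q)^{T}$, introducing the slack control $w\geq 0$ so that the dynamics read $\dot{y}=\varphi(q,u)+w$ and $\dot{q}=f_{u}(q)$ as in (3.8)--(3.9). The purpose of the nonnegative slack $w$ is to turn the \emph{minimization} of $J$ into a \emph{boundary-attainment} statement: if $\tilde{u}$ minimizes $J(u)$, then no admissible pair $(u,w)$ can drive the terminal value of $y$ below $J(\tilde{u})$, so the endpoint $\tilde{\hat{q}}(t_{1})$ cannot lie in the interior of the attainable set $A_{\hat{q}_{0}}(t_{1})$ and must sit on its boundary. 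This is exactly the condition $\tilde{\hat{q}}(t_{1})\in\partial A_{\hat{q}_{0}}(t_{1})$ recorded in the Remark of Section 3.1, which is what is needed to invoke Theorem 2.1.

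Next I would apply Theorem 2.1 to this augmented port-Hamiltonian system, taking the augmented Hamiltonian $\hat{h}_{(w,u)}(\nu,\lambda^{*})=\langle\lambda^{*},f_{u}\rangle+\nu(\varphi+w)$ of (3.10), into which the port contribution $\int_{0}^{t}e f+e'f'\,d\tau$ enters through the maximum condition furnished by Theorem 2.1. This produces a nontrivial Lipschitzian covector curve on the augmented cotangent bundle, whose components I denote $(\nu,\lambda_{t})$, together with an adjoint system and a maximum condition. Because $\hat{h}$ is independent of $y$, the adjoint equation for the cost coordinate is $\dot{\nu}=\partial\hat{h}/\partial y=0$, so $\nu$ is constant along the trajectory, recovering the first line of (3.11); the adjoint equation in the $\lambda$-component reproduces $\partial\lambda/\partial t=-\partial H_{u}/\partial q+A(q)f$, which is (3.13), and nonvanishing of the full augmented covector gives $\lambda_{t}\not\equiv 0$, which is (3.12).

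The decisive step is the sign of $\nu$ and the elimination of the slack variable. Since the maximum condition from Theorem 2.1 is taken jointly over $u\in U$ and $w\geq 0$, and $\hat{h}$ depends on $w$ only through the linear term $\nu w$, finiteness of $\max_{w\geq 0}\{\cdots+\nu w\}$ forces $\nu\leq 0$, which is (3.15), and the supremum over $w\geq 0$ is then attained at $w=0$. Discarding the $\nu w$ term and folding the port integral together with $\nu\varphi$ into the generalized family $h^{\nu}_{u,f}$ collapses the augmented maximum condition to $h^{\nu}_{\tilde{u}(t),\tilde{f}(t)}(\lambda_{t})=\max_{u\in U}h^{\nu}_{u(t),\tilde{f}(t)}(\lambda_{t})$, which is (3.14). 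I expect the main obstacle to be making the boundary-attainment claim rigorous and checking that Theorem 2.1 applies verbatim after the augmentation: the slack enlargement changes the admissible-control class from the one used in Lemma \ref{Lem2}, so one must confirm that $\tilde{\hat{q}}(t_{1})$ is genuinely a boundary point rather than a degenerate point in the $y$-direction, and that the convex-cone and support-hyperplane construction underlying Theorem 2.1 survives the passage to the enlarged control $\hat{u}=(u,w)$.
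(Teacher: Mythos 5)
Your proposal follows essentially the same route as the paper: augmenting the state with the running cost $y=J_{q_0}(u)$, introducing the nonnegative slack $w$ to convert optimality into boundary attainment of the attainable set, applying the port-Hamiltonian PMP (Theorem 2.1) to the augmented system, reading off $\dot\nu=\partial\hat h/\partial y=0$ so that $\nu$ is constant, and deducing $\nu\leq 0$ from the linearity of $\hat h$ in $w$ over $w\geq 0$ before setting $w=0$ in the maximum condition. The caveats you raise about rigor of the boundary-attainment step are reasonable but do not change the argument, which matches the paper's.
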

\begin{rem}
since pair $(\lambda_t^*,\nu)$ can be multiplied by any positive number, only abnormal case $\nu=0$ and normal case $\nu=-1$ need to be considered. Besides, when solving maximum problems, $\nu\leq 0$ becomes $\nu\geq 0$, that is, analogously abnormal cases $\nu=0$ and normal case $\nu=1$.
\end{rem}

\vspace{0.5cm}

\section{{\textbf{Example}}}
To demonstrate the method more specifically, we consider its application to classic Cheapest Stop Problem.

The original problem can be described as follows: A train moves on the railway. We start braking the train at certain initial location and speed. The goal is to stop it with minimum expenditure of energy, which is assumed proportional to the integral of squared acceleration.

Its mathematic statements is
\begin{eqnarray*}
&&\ddot x=u, \\
&&x(0)=x_0\\
&&\dot x(0)=v_0\\
&&t_1 fixed, x(t_1)=x_1, \dot x(0)=0,
\end{eqnarray*}
find $u=\tilde u$ minimizing $J(u)=\int_0^{t_1}u^2dt$.

Now consider that an auxiliary stopping device is introduced. The device is a slope that can be slowly uplifted at the entrance of station, simultaneously saving energy in gravitational form. The saved energy can be partially reused. Our new goal is to stop the train with minimum expenditure of cost, that is, energy used in stopping plus brake abrasion, minus energy that can be reused. The reuse ratio is defined as $R_u$, while the lifting of slope is measured by a continuous variable $f$. As we can see, with $f$ going up, $\dot x$ and $x$ is affected. The braked abrasion is assumed linear to the integral of output $e$.

Its mathematic statement can be organized as follows:
\begin{eqnarray}
&&\left\{
\begin{array}{l}
\dot x_1=x_2\\
\dot x_2=u\\
\end{array}
\right.,x=\left(
\begin{array}{c}
x_1\\
x_2
\end{array}
\right)\in R^2, u\in R\\
&&x(0)=x^{(0)}\\
&&t_1 fixed,\\
&&\dot x=\left(\begin{array}{c}
x_2\\
u
\end{array}\right)+B(x)f\\
&&e_1=A(x)\dot x\\
&&e_2=B(x)\dot x\\
&&\hat J(u)=\int_0^{t_1}u^2+(e_1+e_2)fdt\rightarrow min.
\end{eqnarray}
The system has two obvious output $e_1$ and $e_2$, thus be applied with PMP for port-Hamiltonian systems.
Its generalized Hamiltonian is
\begin{equation*}
h_{u,f,t}^\nu(\xi^*,x)=\xi^*_1 x_2+\xi^*_2 u+\nu(u^2+(e_1+e_2)f)+\int_0^t(e_1+e_2)fd\tau, \xi^*=\left(
\begin{array}{c}
\xi_1^*\\
\xi_2^*
\end{array}
\right)\in T_{x(t)}^*R^2,
\end{equation*}
Thus it is embedded in a port-Hamiltonian systems:
\begin{equation}
\left\{
\begin{aligned}
&\frac{\partial q}{\partial t}=\frac{\partial H_u}{\partial \xi}+B(q)f\\
&\frac{\partial \xi}{\partial t}=-\frac{\partial H_u}{\partial q}+A(q)f\\
&e=A^{T}(q)\frac{\partial H_u}{\partial p}\\
&e'=B^{T}(q)\frac{\partial H_u}{\partial q}
\end{aligned}
\right.
\end{equation}
First consider abnormal case $\nu=0$:
\begin{equation*}
h_u^0(\xi^*,x)=\xi^*_1 x_2+\xi^*_2 u.
\end{equation*}
If maximum of $h_u^0(\xi^*,x)$ exists, there must be $E\xi^*_2\equiv0$. This means $\tilde u \equiv0$, which contradicts nontrivial requirement.\\
Then consider the normal case $\nu=-1$:
\begin{equation*}
h_u^{-1}(\xi^*,x)=\xi^*_1 x_2+\xi^*_2 u-(u^2+(e_1+e_2)f).
\end{equation*}
Thus,
\begin{equation*}
\left\{
\begin{aligned}
\dot\xi_1^*=\frac{\partial\xi_1^*}{\partial t}=\frac{\partial h_u^{-1}}{\partial x_1}=0\\
\dot\xi_2^*=\frac{\partial\xi_2^*}{\partial t}=\frac{\partial h_u^{-1}}{\partial x_2}=\xi_1^*
\end{aligned}
\right.
\end{equation*}
Due to the Hamiltonian maximum generated by U,
\begin{equation*}
h_u^{-1}(\xi^*,x)=\xi^*_1 x_2+\xi^*_2 u-(u^2+(e_1+e_2)f).
\end{equation*}
\begin{equation*}
\frac{\partial h_u^{-1}}{\partial u}=0
\end{equation*}
Thus we get
\begin{equation*}
\tilde u(t)=\frac{1}{2}\xi_2^*(t)=\alpha t+\beta.
\end{equation*}

The optimal control shall be linear. We can easily find the optimal control when put the conclusion into origin system. The control system with both control $u$ and input $f$ can be now transformed into a classic optimal problem with only continuous control $f$.

\vspace{1cm}

\section{{\textbf{Conclusion}}}
The paper is focused on certain kind of extended control problems and give a feasible solution to the problems with output or input terms. Core idea of the solution is to divide the control systems into two parts and find the optimal control for fixed input/output. The theoretic basis of this method is PMP for port-Hamiltonian Systems which is introduced in the second section. The method, in essence, is a generalization of classic PMP since it's obvious that the theorem regress to PMP when output/input become zero.

Up to now we can only make use of certain kind of port-Hamiltonian systems, specifically the ones with symplectic structure. It's yet to be researched how general port-Hamiltonian systems can be connected to control problems. Though port theory, as a new subject, has been perfectly developed during the years, there are still blanks in its connection with classic subjects. Beyond doubt, there is great potential in this subject. Port-Hamiltonian theory stand a good chance to make improvement in not only control theory, but other applied fields.

\vspace{1cm}

\end{document}